\newlist{steps}{enumerate}{1}
\setlist[steps, 1]{label = Step \arabic*:}
\newtheorem{theorem}{Theorem}[section]
\newtheorem{corollary}{Corollary}[section]
\newtheorem{lemma}{Lemma}[section]
\newtheorem{proposition}{Proposition}[section]
\newtheorem{remark}{Remark}[section]
\newtheorem{definition}{Definition}[section]
\numberwithin{equation}{section}
\def\Vec#1{\mbox{\boldmath $#1$}}
\numberwithin{equation}{section}
\newcommand{\R}{\mathbb R}
\renewcommand{\H}{\mathbb H}
\newcommand{\C}{\mathbb C}
\newcommand{\D}{\mathbb D}
\newcommand{\id}{\operatorname{id}}
\newcommand{\SL}[1]{{\rm SL} (2, #1)}
\newcommand{\tr}{\operatorname{tr}}
\newcommand{\eq}[1]{\begin{align*}#1 \end{align*}}
\newcommand{\diag}{\operatorname{diag}}
\renewcommand{\d}{\mathrm{d}}
\newcommand{\dz}{\mathrm{d}z}
\newcommand{\dzb}{\mathrm{d}\bar z}
\newcommand{\Geo}{\mathrm{Geo}(\mathbb{H}^3)}
\newcommand{\ISU}{\mathrm {SU}(1, 1)}
\newcommand{\SU}{\mathrm {SU}(2)}
\newcommand{\LISU}{\Lambda \mathrm {SU}(1, 1)_{\tau}}
\newcommand{\Uone}{\mathrm U(1)}
\newcommand{\UH}{\mathrm{U}\mathbb{H}^3}
\newcommand{\Gr}{\mathrm{Gr}_{1,1}(\blue{\mathbb{E}^{1,3}})}
\newcommand{\LSL}{\Lambda\mathrm{SL}(2, \mathbb{C})_{\tau}}
\newcommand{\LSLU}{\Lambda\mathrm{SL}(2, \mathbb{C})}
\numberwithin{equation}{section}
\def\Vec#1{\mbox{\boldmath $#1$}}
\newcommand{\red}[1]{#1} 
\newcommand{\blue}[1]{#1} 
\renewcommand\st[1]{\@bsphack\@esphack}
\begin{document}
\title{The complex landslide flow and the method of integrable systems}
\dedicatory{}
  \author[S.-P.~Kobayashi]{Shimpei Kobayashi}
 \address{Department of Mathematics, Hokkaido University, 
 Sapporo, 060-0810, Japan}
 \email{shimpei@math.sci.hokudai.ac.jp}
 \thanks{The author is partially supported by Kakenhi 22K03304.}
 \subjclass[2020]{Primary~32G15, Seondary~53C43}
 \keywords{Constant Gaussian curvature surfaces; complex landslide flow; harmonic maps; integrable system method}
 \date{\today}
\pagestyle{plain}
\begin{abstract}
 We investigate a connection between the complex landslide flow, defined on a pair of 
 Teichm\"uller spaces, and the integrable system \st{method's}\red{approach}  
 to harmonic maps into a symmetric space.
 We will prove that the holonomy of the complex landslide flow 
 can be derived from the holonomy of the family of flat connections determined by
 a harmonic map into the hyperbolic two-space.
  \end{abstract}
\maketitle    
\section{Introduction and the main result}
\textbf{(A)} \st{The McMullen's complex earthquake flow is a natural  holomorphic extension of the Thurston's earthquake flow, while the \textit{complex landslide flow} introduced in represents a complex cyclic extension  of the complex earthquake flow.}
\red{McMullen's complex earthquake flow \cite{McM} extends Thurston's earthquake flow holomorphically, while the \textit{complex landslide flow} \cite{BMS:cyc1} provides its  cyclic complex extension.
 This flow arises from the composition of two geometric maps, as described below. Let $M$ denote a differentiable closed oriented surface}
 \st{This flow is defined by the composition of two geometric maps as follows$;$ Let $M$ be a differentiable closed oriented surface} of genus 
 greater than or equal to two.
 \red{Let $\mathcal{T}$ denote the Teichmüller space of hyperbolic metrics on $M$, and $\mathcal{P}$ denote the space of complex projective structures on $M$.}\st{, and let $\mathcal T$ the Teichm\"uller space parameterized by hyperbolic metrics on $M$.
 Moreover, let $\mathcal P$ be the space of complex projective structures on $M$.}
 \red{The complex landslide flow $P$ is defined by the following map:}
\st{The complex landslide flow $P$ is defined by }
\begin{equation}\label{eq:complexlandslide}
P:{\overline \D}^{\times} \times \mathcal T \times \mathcal T \to \mathcal P, \quad 
  (q, h, h^{\star}) \mapsto  SGr_s (L_{\theta} (h, h^{\star})),
 \end{equation}
  where $s = -\log |q|$ and $\theta= \arg q$, and moreover, 
$L:[0, 2 \pi) \times \mathcal T \times \mathcal T \to \mathcal T \times \mathcal T$ 
 and $SGr: [0,\infty) \times \mathcal T \times \mathcal T \to \mathcal P$ 
are  the \textit{landslide flow} and  the \textit{smooth grafting}
 \red{defined}\st{given in the} below. 

\textbf{The landslide:} Let $h, h^{\star} \in \mathcal T$ be hyperbolic metrics on $M$.
 Then there exists a unique bundle morphism $b: TM \to TM$ such that 
 \begin{enumerate}
 \item It is self-adjoint with respect to $h$.
 \item The determinant is $1$.
  \item It satisfies the Codazzi equation $\d^{\nabla} b = 0$, that is, 
  for  the Levi-Civita connection $\nabla$ of $h$ and vector fields $u, v$ 
  on $M$, $\d^{\nabla} b$ is defined by
 $(\d^{\nabla} b) (u, v) :=\nabla_u (bv)-\nabla_v (bu)- b[u, v]$.
  \item The metric given by $h(b\bullet, b\bullet)$ 
  is isometric to $h^\star$ by a diffeomorphism
   isotopic to the identity.
 \end{enumerate}
 The above bundle morphism $b$ has been called the \textit{Labourie operator} \cite{La}. 
 Then \st{for a given $\theta \in [0, 2 \pi)$} the landslide $L_{\theta}$, \red{where $\theta \in [0, 2 \pi)$,} on $\mathcal T \times \mathcal T$
 is defined by
 \begin{equation}\label{eq:landslide}
  L_{\theta} (h, h^{\star}) := (h(\beta_\theta \bullet, \beta_\theta\bullet), 
  h(\beta_{\theta+\pi} \bullet, \beta_{\theta+\pi}\bullet)),  \quad 
  \beta_{\theta} :=\cos (\theta/2) E + \sin (\theta/2) J b,
 \end{equation}
 where $\beta_{\theta}$ is a family of bundle morphism\red{s} \st{defined by}
 with the identity $E: TM \to TM$ and the complex structure $J$ with respect to $h$.\red{
 Clearly $\det \beta_\theta = 1$, and $\beta_\theta$ satisfies the Codazzi equation \cite[Lemma 3.2]{BMS:cyc1}. Additionally,}
 \st{It is evident that $\beta_\theta$ satisfies the Codazzi equation, and $\det \beta_\theta = 1$.} the metric \st{$h_{\theta}$ given by}
$h_{\theta}(u,v) = h(\beta_{\theta}u, \beta_{\theta}v)$ 
 is a hyperbolic metric on $M$ for all $\theta \in \mathbb R$ 
 and $L$ defines an $S^1$-flow on $\mathcal T \times \mathcal T$
 \cite[Theorem 1.8]{BMS:cyc1}, 
 that is, $L_{\theta^{\prime}} (L_{\theta} (h, h^{\star}))
  = L_{\theta^{\prime}+\theta} (h, h^{\star})$ holds. 
\red{Define $L^1$ as the composition of $L$ with the projection onto the first factor.}
 \st{Let $L^1$ be a map defined by the composition of $L$ with the projection on the first 
 factor.} Then $L^1_{e^{i\theta}}$  is analogous to \st{the} Thurston's earthquake map 
 $E:\mathcal T \times \mathcal {ML} \to \mathcal T$, where $\mathcal {ML}$
 is the space of measured lamination\red{s} on $M$.

\textbf{The smooth grafting:} For a given pair $(h, h^{\star}) \in \mathcal T \times 
\mathcal T$ and 
 a positive number $s>0$, the \textit{smooth grafting} $SGr_s$
 \red{is defined using a constant Gaussian curvature  $-1< K < 0$ (CGC) surface in hyperbolic $3$-space $\mathbb{H}^3$ and its Gauss map, \cite[Section 5]{BMS:cyc1}.}
\st{can be defined by a corresponding constant Gaussian curvature (CGC for short) surfaces with $-1< K < 0$ in the hyperbolic $3$-space $\mathbb H^3$
 and its Gauss map as follows.:} 
 \red{Let $b$ be the Labourie operator such that $h^\star = h(b \bullet, b \bullet)$. Define the metric $\mathrm{I} = \cosh^2 (s/2) h$ and the operator $B = -\tanh(s/2) b$.}
\st{Let $b$ be the Labourie operator as before, that is, 
 $h^\star= h(b \bullet, b \bullet)$ holds, and consider the metric $\mathrm{I} = \cosh^2 (s/2) h$
 and the operator $B= -\tanh (s/2) b$. }
 Then the propert\red{ies}\st{y} \red{(1) through (4)} of the Labourie operator impl\red{y}\st{ies} that 
 \st{the} there exists 
 a unique CGC  $-1< K=-1/\cosh^2(s/2)<0$ surface $f$ in $\mathbb H^3$ whose first fundamental form
  and shape operator are
 given by $\mathrm{I}$ and $B$, respectively. 
 \st{Moreover, consider a}\red{Take the} unique geodesic starting from a point $f(p) \in \mathbb H^3 (p \in M)$, with \st{the} 
 the unit normal \st{normal} $n(p)$ of $f(p)$ as initial velocity. 
 \red{Then by considering the point the geodesic intersecting with}\st{ending to} the boundary of $\mathbb H^3 \cong 
 \mathbb C P^1$, \st{gives the} a developing map $\operatorname{dev}_s: \widetilde M \to \mathbb C P^1$ 
 and \st{defines} a complex projective structure on $M$ \red{are given}, where 
 $\widetilde M$ denotes the universal cover of $M$. 
 \st{Thus} \red{In this way,} the smooth grafting $SGr_s: \mathcal T \times \mathcal T
  \to \mathcal P$ \st{has been}\red{is defined.}
  
  The complex landslide flow $P_q= SGr_s \circ L_{\theta}, \;
  (s = -\log|q|,  \; \theta = \arg q)$ shares many properties with the complex earthquake flow, 
  and in particular, it converges to the complex earthquake flow, 
  \cite{BMS:cyc1, BMS:cyc2}. \st{in details.}
  \red{In particular, in \cite[Theorem 5.1]{BMS:cyc1},} it \st{has been}\red{was} shown that the complex landslide $P$ \st{is}\red{was} actually a flow, and it \st{is}\red{was} holomorphic with respect to $q \in \D^{\times}$ by showing the holomorphicity of the holonomy of the projective structure. 
  See \st{also} Section \ref{sbsc:cland} below, \cite[Section 5]{BMS:cyc1} and 
  \cite[Proposition 5.14]{BMS:cyc1} \blue{or \cite{BE}} for more details. 

\textbf{(B)} \st{On the one hand,  a} \red{
 In \cite{IK:CGCH3},
 weakly complete CGC surfaces 
 with $-1< K < 0$ in $\mathbb H^3$ \red{were} classified 
 \st{in terms of}using} integrable system method\red{s}, \st{the so-called}\red{in particular} the \textit{loop group method}.
 The heart of the classification \st{is}\red{was} based on  harmonic maps from a Riemann surface $M$ into \st{the}\red{a} symmetric space, the hyperbolic $2$-space $\H^2 = \ISU/\Uone$ 
 and the spectral \red{parameter} deformation family in $\mathbb C^{\times}$.
 
 \textbf{Harmonic maps:} Recall that a harmonic map from a Riemann surface $M$
 into a symmetric space $G/K$ is characterized as follows, for example see \cite{DPW}.
 Let us consider a smooth map $g$ from a Riemann surface $M$ into a symmetric space $G/K$. Denote the universal cover of $M$ by  $\widetilde M$, take a lift $F: \widetilde M  \to G$, and let 
 $\alpha = F^{-1} \d F$ be the Maurer-Cartan form. Moreover, define a family \red{of connections} parameterized by $\lambda \in S^1 \subset \C$
 \begin{equation}\label{eq:familyconn}
 \nabla^{\lambda} = \d + \alpha^{\lambda}, \quad \mbox{with}\quad
\alpha^{\lambda} = \lambda^{-1}\alpha_{\mathfrak p}^{\prime} + \alpha_{\mathfrak k} + 
 \lambda \alpha_{\mathfrak p}^{\prime \prime}, 
 \end{equation}
 where $\alpha_{\mathfrak p}$ and $\alpha_{\mathfrak k}$
 denote the $\mathfrak p$- and  the $\mathfrak k$-parts
 of the direct \red{sum} decomposition of $\mathfrak g = \mathfrak k \oplus \mathfrak p$ for the Lie algebra $\operatorname{Lie} (G) = \mathfrak g$ and $\operatorname{Lie} (K) = \mathfrak k$, \red{the Cartan decomposition of 
 $\mathfrak g$}. 
 Moreover, $\prime$ and $\prime \prime$ denote the 
 $(1,0)$- and $(0,1)$-part with respect to the complex
 structure of the Riemann surface $M$. Then 
 $ \nabla^{\lambda}$ defines a family of connections 
 parameterized by $\lambda \in \mathbb C^{\times}$
 and  $\d+\alpha^{\lambda}|_{\lambda =1}$ gives a flat connection
 from the Maurer-Cartan equation $(\d \alpha^{\lambda} +
\alpha^{\lambda} \wedge \alpha^{\lambda})|_{\lambda =1} = 0$ for the frame $F$.
 It is well known that $g:M\to G/K$ is harmonic if and 
 only if $ \nabla^{\lambda}$ gives a family of \textit{flat connections} for any 
  $\lambda \in S^1$, that is, 
 $\d \alpha^{\lambda} +
\alpha^{\lambda} \wedge \alpha^{\lambda}= 0$
 holds for all  $\lambda \in S^1$.
 Then there exists a family of frames 
 \[
  F^{\lambda}:\widetilde{M} \to \Lambda G_{\tau}
\]
 such that $\alpha^{\lambda} = (F^{\lambda})^{-1}  \d F^{\lambda}$
 and it is called the \textit{extended frame} of the  harmonic map $g$.
 Here $\Lambda G_{\tau}$ denote\red{s} the loop group of $G$:
 \[
 \Lambda G_{\tau} = \{\gamma : S^1 \to G\mid \mbox{$\gamma$ is smooth and 
 $\gamma(-\lambda) = \tau \gamma(\lambda)$} \},
 \]
where $\tau$ is the involution \red{corresponding}\st{according} to the symmetric space $G/K$, that is, 
$(\operatorname{fix} \tau)_0 \subset K \subset \operatorname{fix} \tau$,
where $\operatorname{fix} \tau$ denotes the fixed point set of $\tau$ in $G$, 
 and the subscript zero denotes the identity component.
Moreover, introducing a suitable topology to  $\Lambda G_{\tau}$, 
it becomes \red{an} infinite-dimensional Banach Lie group, the so-called \textit{loop group} \cite{LoopGroup}.
It has been known \red{as} the Weierstrass type representation for such harmonic maps
\red{through the loop group decomposition of 
$\Lambda G_{\tau}$, see  \cite{DPW}.}
\red{Note that the harmonic map from a 
Riemann surface $M$ into $G/K$ depends on 
the complex structure on $M$ and the metric of $G/K$, which can be induced by the Killing form of $G$. 
 However the complex structure on $M$ for the harmonic map is not directly related to a
pair of points $\mathcal T \times \mathcal T$
to define the complex landslide in (A). }

  \textbf{The spectral \red{parameter} deformation:} In our case, $G/K$
  is the hyperbolic two space $\mathbb H^2$ \red{which can be represented by  the degree $2$
  indefinite special unitary group $G =\ISU$ and the isotropy subgroup $K = \Uone=\{ \operatorname{diag} (e^{i t}, e^{-i t})\mid 
   t  \in \R\} \subset \ISU$.} \st{Moreover,}\red{Note that} the involution $\tau$ is explicitly 
 given by $\tau F = \operatorname{Ad} \operatorname{diag}(1, -1) F$ for 
  $F \in \ISU$. Let us consider a harmonic map from $M$ into $\mathbb H^2$
 and take the extended frame $F^{\lambda}:\widetilde{M} \to \LISU$ as above.
 If we evaluate the extended frame
 $F^{\lambda}$ at $\lambda \in \mathbb C^{\times}\setminus S^1$ then
 it takes values in $\LSL$ not in $\LISU$. In \cite[Theorem 2.1]{IK:CGCH3},
it \st{has been}\red{was} shown that the extended frame $F^{\lambda}$ \red{gave}\st{gives} a family of CGC surfaces \st{with $-1< K <0$} in $\mathbb H^3$ as
  \[
  \blue{\hat f_q}= F^{\lambda} (F^{\lambda})^*|_{\lambda= q,} \quad (q \in \mathbb D^
  \times),
  \]
 where the upper subscript $*$ denotes a composition of the complex conjugation and the transpose,
 and the constant Gaussian curvature \red{$K$ of $\hat f_q$} is given by 
 \[
K = - \left(\frac{2 |q|}{|q|^2 + 1}\right)^2 \in (-1, 0),
\quad (|q|  \in (0, 1)).
\]
 Note here that the hyperbolic three-space $\mathbb H^3$ is realized by the quadric in the complex Hermitian $2$\red{-}by\red{-}$2$ matrices, see Section \ref{sbsc:Gauss}.
 The evaluation at $q\in \mathbb D^{\times}$
 has been called the \textit{spectral 
 \red{parameter} deformation} of the extended frame \red{\cite[Section 2.4]{IK:CGCH3}}.
 This construction was the heart of the classification of weakly complete CGC $-1<K <0$ 
 surfaces in $\mathbb H^3$\st{, see CGCH3}.
 Moreover, for a fixed $|q| \in (0,1)$, $\blue{\hat f_{q}}$ gives the
 $S^1$-family of CGC surfaces with a fixed
 \red{Gaussian curvature in} $-1< K = - 4 |q|^2/(|q|^2 + 1)^2 <0$ in $\mathbb H^3$ and it  
 becomes the \textit{associated family} of a CGC $-1< K <0$ surface in $\mathbb H^3$.
 \red{The scaled first and third fundamental forms of $\hat f_q$ are 
 hyperbolic metrics.}\st{, see Lemma for details.}
 \red{In this way,  we obtain a point 
 in  $\mathcal T \times \mathcal T$
 from the harmonic map from $M$ into $\mathbb H^2$.}

\textbf{(C)} From the discussions of (A) and (B),
 it is natural to expect a certain relation between the 
 above constructions. In both constructions, the $S^1$-families
 and the scaling families naturally appear, and in particular 
 the landslide and the smooth grafting look 
 similar to the associated family and the spectral \red{parameter} deformation, respectively.
 \st{In fact,} The following theorem is the main result of the paper.
 \begin{theorem}\label{thm:main}
 \red{Let $P_q(h, h^{\star})\; (q \in {\overline \D}^{\times}, (h, h^{\star}) \in 
 \mathcal T\times \mathcal T)$ 
 be the complex landslide flow. Then there 
 exists a family of flat connections $\nabla^{\lambda}$ 
 of a harmonic map into $\mathbb H^2$ corresponding to $(h, h^{\star})$
  such that 
 the holonomy of $P_q(h, h^{\star})$ is given by 
 the holonomy of $\nabla^{\lambda}$
 evaluated at $\lambda = \sqrt{q} \in {\overline \D}^{\times}$. 
 }
\end{theorem}

 We will give a proof in Section \ref{sc:proof}. 
\red{Note that} the extend\red{ed} frame $F^{\lambda}$ has a holomorphic dependence on the spectral parameter $\lambda \in \mathbb C^{\times}$.
  In particular, it is defined on  ${\overline \D}^{\times}$. Then, by noting
  Remark \ref{rm:main} (1), the holomorphic dependence of the holonomy of the complex landslide flow follows, and the following 
 statement holds.
\begin{corollary}\label{coro:landslide}
 \red{
 The map $q \mapsto P_q(h, h^{\star})$, from $\D^{\times}$ to $\mathcal{P}$, is holomorphic.
  }
 \end{corollary}
\begin{remark}\label{rm:main}
\mbox{}
\begin{enumerate}

\item[{\rm(1)}] Because we utilized the twisted loop group formulation 
 of a harmonic map into 
a symmetric space, the square root $\lambda = \sqrt{q}$ in {\rm Theorem \ref{thm:main}} arises.
 However, using the untwisted loop group formulation as demonstrated in \cite{BRS:Min}, it can be 
 expressed by the evaluation as $\lambda = q$,
 see the proof in {\rm Section \ref{sc:proof}}.
 Then {\rm Corollary \ref{coro:landslide}} follows immediately.
 \item[{\rm(2)}] {\rm Corollary \ref{coro:landslide}} has been proved in \cite[Theorem 5.1]{BMS:cyc1}
 by a direct computation of derivatives of the holonomy and by showing that the Cauchy-Riemann equation holds,
 however, {\rm Theorem \ref{thm:main}} gives a natural correspondence between 
 the complex landslide flow and the family of flat connections.
\item[{\rm(3)}]  A relation between the harmonic map and the complex landslide has been also 
 discussed in  \cite{BMS:cyc1}, and the spectral \red{parameter} deformation family in this paper will 
 give a natural understanding of such relation through the integrable systems approach.
 The key observation is a relation  in {\rm Lemma \ref{lem:J}}
 of two complex structures on $M$, namely,
 the $J$ and $i=\sqrt{-1}$ induced from the first and second fundamental forms
 of a CGC surface, respectively.

\end{enumerate}
\end{remark}
 The paper is organized as follows\st{:}\red{.} Preliminaries will be given in Section \ref{sc:Pre}
 and the proof of Theorem \ref{thm:main} will be given in Section \ref{sc:proof}.
 
{\bf Acknowledgments:}
\red{I would like to express my sincere gratitude to the anonymous reviewers for their valuable comments and suggestions, which have greatly contributed to improving the quality of this paper.}

\section{Preliminaries}\label{sc:Pre}
In this section, according to \cite{IK:CGCH3} we will recall surfaces in the hyperbolic three-space $\mathbb H^3$ and 
two Gauss maps, that is, the Legendrian and the Lagrangian Gauss maps, respectively.
Then by the spectral \red{parameter} deformation of the Lagrangian Gauss map, we will relate the extended 
frame of a harmonic map into hyperbolic two-space $\mathbb H^2$ and 
a constant Gaussian curvature $-1< K<0$ surface in $\mathbb H^3$.
Moreover, we will review the complex landslide and a CGC $-1< K<0$ surface in $\mathbb H^3$
according to \cite{BMS:cyc1}.

\subsection{Surfaces in the hyperbolic three-space}\label{sbsc:Gauss}
Let
\[
\boldsymbol{e}_0=
\begin{pmatrix}
1 & 0\\
0 & 1
\end{pmatrix}
\ \
\boldsymbol{e}_1=
\begin{pmatrix}
1 & 0\\
0 & -1
\end{pmatrix},
\
\boldsymbol{e}_2=
\begin{pmatrix}
0 & -i\\
i & 0
\end{pmatrix}
\ \mbox{and} \,\
\boldsymbol{e}_3=
\begin{pmatrix}
0 & 1\\
1 & 0
\end{pmatrix}.
\]
 The $4$-dimensional Minkowski space $\mathbb E^{1,3}$ 
 can be identified with the complex Hermitian $2$\red{-}by\red{-}$2$ matrices 
\[
\operatorname{Her}(2, \mathbb C) = 
\left \{
  \blue{\Vec{\xi}}= \sum_{j=0}^3\xi_j \Vec{e}_j \mid \xi_j \in \mathbb R
\right\} \longleftrightarrow \mathbb E^{1,3} = \left\{(\xi_0, \xi_1, \xi_2, \xi_3) \mid 
 \xi_0, \xi_1, \xi_2, \xi_3 \in \R \right\}.
\]
Then the standard inner product of $\mathbb E^{1,3}$ 
is given by $\langle \blue{\Vec{\xi}}, \blue{\Vec{\eta}}\rangle = - \frac12 \operatorname{tr} (\blue{\Vec{\xi}} \Vec{e}_2 {}^t \blue{\Vec{\xi}} \Vec{e}_2)$
 and $\langle \blue{\Vec{\xi}}, \blue{\Vec{\xi}}\rangle= - \det \blue{\Vec{\xi}} $
under the identification of $\mathbb E^{1,3}\cong \operatorname{Her}(2, \mathbb C)$. 
 The \textit{hyperbolic $3$-space} $\mathbb H^3$ 
 is defined by the unit central hyperquadrics in $\mathbb E^{1,3}
  \cong \operatorname{Her}(2, \mathbb C) $:
\[
\mathbb H^3 = \left\{
\blue{\Vec{\xi}} \in \operatorname{Her}(2, \mathbb C)  \mid 
\det \blue{\Vec{\xi}} = 1, \, \tr \blue{\Vec{\xi}} >0
\right\}.
\]
The special linear group of degree two $\SL{\mathbb C}$ acts isometrically and transitively on $\mathbb H^3$
via the action $(g, \blue{\Vec{\xi}})$ as $g \blue{\Vec{\xi}} g^*$, where the subscript $*$ denotes 
$g^*= {}^t \bar g$ for $g \in \SL{\mathbb C}$. The isotropy subgroup of this action at $\Vec{e}_0 = (1, 0, 0, 0) \cong \id$ is the special unitary group $\SU$ and hence $\mathbb H^3$ can 
 be represented as a homogeneous Riemannian symmetric space $\mathbb H^3 = \SL{\mathbb C}/\SU$. The natural projection $\pi: \SL{\mathbb C} \to 
 \mathbb H^3$ is given by $\pi (g) = g g^*$ and thus the hyperbolic $3$-space can be represented as $\mathbb H^3 =\{ g g^* \mid g \in \SL{\mathbb C}\}$.

 Let $f: M \to \mathbb H^3$ be a oriented surface and assume that its Gaussian curvature 
 satisfies $K>-1$.
 Then by the formula $K = -1 + \det B$, where $B$ is the shape operator of 
 $f$, $\det B>0$ follows. Thus the second fundamental form $\textrm{I\!I}$ defines a 
 Riemannian metric \st{and thus the unique}\red{which induces a} complex structure \st{follows}\red{on $f$}. Let us denote the 
 complex coordinate induced from the second fundamental form by $z = x + y i$.
 Then the fundamental forms can be represented as follows; Set $\sigma = \det B$ and
 the unit normal of $f$ as $n$. The first, second and third fundamental forms can be 
 computed by 
 \begin{align}
 \mathrm{I}&=\langle \d f, \d f\rangle=  Q \> \dz^2+(e^u +|Q|^2e^{-u})\> \dz \dzb
+\bar Q\>\dzb^2, \label{eq:1}\\
 \mathrm{I\!I}&=\langle \d f, \d  n\rangle=\sigma (e^u -|Q|^2e^{-u})\>\dz \dzb,  \label{eq:2}\\
 \mathrm{I\!I\!I} &=\langle \d n, \d n\rangle=\sigma^2\left(-Q \> \dz^2+(e^u +|Q|^2e^{-u})\> \dz \dzb
-\bar Q\>\dzb^2\right),\label{eq:3}
 \end{align}
  where $u : M \to \R$ is a smooth function and $Q \,\dz^2$ is the 
  $(2, 0)$-part of the first fundamental form with respect to the conformal structure 
 of the second fundamental form, which will be called the \textit{Klotz differential}.
 Note that the mean curvature $H$  and the Gaussian curvature $K$ of the surface $f$ can be represented by 
\begin{equation}\label{eq:H}
H = \frac{\sigma}2\, \frac{e^{2u} +|Q|^2}{e^{2u} -|Q|^2} \quad \mbox{and} \quad 
K = -1 + \sigma^2,
\end{equation}
 see \cite[\red{Section 1.1}]{IK:CGCH3} for more details.
\subsection{Natural projections, two Gauss maps and \red{a} Ruh-Vilms type theorem}
 We have several natural projections from the unit tangent sphere bundle 
 of $\mathbb H^3$
\eq{
\mathrm{U}\mathbb{H}^{3}=
\{ (\Vec{x},\Vec{v})\in \mathrm{Her}_2\mathbb{C}\times 
\mathrm{Her}_2\mathbb{C}
\
\vert
\
\det \Vec{x}=1, \mathrm{tr}\>\Vec{x}>0,\>
\det \Vec{v}=-1, \> \langle \Vec{x},\Vec{v}\rangle =0\},
}
 see \cite{DIK1} \red{for}\st{in} detail\red{s}. 
 A natural projection $\pi_+ : \UH \to \mathbb H^3$ is given by 
 $\pi_+ (\Vec{x}, \Vec{v}) = \Vec{x}$. 
\st{We now introduce notion of 
 the space}\red{Let} $\Geo$ \red{be the space} of all oriented 
 geodesics in $\mathbb{H}^3$ which is identified with the Grassmannian 
 manifold $\Gr$ of all oriented timelike planes 
 in $\mathbb{E}^{1,3}=\mathrm{SL}_{2}\mathbb{C}/\mathrm{GL}_1\mathbb{C}$.
 \st{It is known that} $\Geo$ can be represented as 
 \begin{equation}\label{eq:Gr}
 \Geo \cong \Gr = \mathrm{SL}_{2}\mathbb{C}/\mathrm{GL}_1\mathbb{C} 
 \cong \C P^1 \times \C P^1 \setminus \operatorname{diag}.
 \end{equation}
   Then we have a natural projection $\pi_0:\mathrm{U}\mathbb{H}^3\to 
 \Geo$ by
$\pi_0(\Vec{x},\Vec{v})=\Vec{x}\wedge \Vec{v}$.
 Finally there is also a natural projection from $\pi_* : \SL{\C} \to \UH$ by 
 $\pi_*(g) = (gg^*, g\Vec{e}_1g^*)$. Note that the projection 
 $\pi : \SL{\C} \to \mathbb H^3=\SL{\C}/\SU$
  is  given by the composition $\pi = \pi_+ \circ\pi_*$.
 \begin{figure}
 \end{figure}

 Using these projections, we will recall two Gauss maps for a surface in $\mathbb H^3$, which 
 are crucial in this paper. Note that several natural Gauss maps were already 
 defined in \red{see Appendix of the archive version} \cite{DIK1}.
\begin{definition}
 Let $f:M\to \mathbb{H}^3$ be a surface with unit normal $n$, and let 
 $\mathrm{Gr}_{1,1}(\mathbb{E}^{1,3}) = \Geo$.
 The {\rm Legendrian Gauss map} $G_{Le}$ and the {\rm Lagrangian Gauss map} $G_{La}$  of $f$ are 
 respective defined by 
 \eq{
G_{Le}=(f,n):M\to \mathrm{U}\mathbb{H}^3 \quad\mbox{and}\quad
G_{La}=f\wedge n:M\to \mathrm{Gr}_{1,1}(\mathbb{E}^{1,3}).
}
\end{definition}


 In \cite[Theorem A]{IK:CGCH3}, the following theorem has been proved.
\begin{theorem} Let $f: M \to \mathbb H^3$ be a surface with Gaussian curvature $K>-1$ and let $n$ be the 
 unit normal of $f$.
 Moreover let $G_{Le}=(f,n): M \to \UH$ and $G_{La} = f \wedge n: M \to \Geo$ be the Legendrian and Lagrangian 
 Gauss maps, respectively. Then the following statements are mutually equivalent$:$
 \begin{enumerate}
\item[$(1)$] The Gaussian curvature of $f$ is constant. 
\item[$(2)$] The Legendrian Gauss map is harmonic.
\item[$(3)$] The Lagrangian Gauss map is harmonic.
\item[$(4)$] The Klotz differential is holomorphic. 
\end{enumerate}
\end{theorem}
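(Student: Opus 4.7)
The strategy is to work in the local conformal coordinate $z$ induced by $\mathrm{I\!I}$, in which the three fundamental forms take the explicit shape displayed in \eqref{eq:1}--\eqref{eq:3} in terms of the scalar $u$, the Klotz coefficient $Q$, and $\sigma = \sqrt{K+1}$. The first step is to translate the Codazzi equation $\d^{\nabla} B = 0$ of the shape operator into a PDE system for $(u, Q, \sigma)$. Expanding $\d^{\nabla} B$ in the frame dual to $\d z, \d\bar z$ and separating types, one obtains a single complex identity linking $\partial_{\bar z} Q$ to $\partial_z \sigma$; since $\sigma$ is real-valued, this forces $\bar\partial Q = 0$ to be equivalent to $\partial_z \sigma = 0$, hence to $\sigma$ and $K = \sigma^2 - 1$ being constant. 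This proves $(1)\Leftrightarrow (4)$.

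For the remaining equivalences I would adopt a Ruh--Vilms viewpoint. Pick a frame $F : \widetilde M \to \SL{\mathbb C}$ with $f = F F^{*}$ and $n = F \Vec{e}_1 F^{*}$, so that the Legendrian Gauss map factors as $G_{Le} = \pi_* \circ F$ into $\UH = \SL{\mathbb C}/\Uone$ and the Lagrangian Gauss map as $G_{La} = \pi_0 \circ \pi_* \circ F$ into $\Geo = \SL{\mathbb C}/\mathrm{GL}_1\mathbb C$. For each quotient, decompose $\slt = \mathfrak k \oplus \mathfrak p$ under the corresponding involution, split the Maurer--Cartan form as $F^{-1}\d F = \alpha_{\mathfrak k} + \alpha_{\mathfrak p}' + \alpha_{\mathfrak p}''$, and apply the Ruh--Vilms type criterion identifying harmonicity of the Gauss map with holomorphicity of $\alpha_{\mathfrak p}'$ relative to the $\mathfrak k$-connection (equivalently, flatness of the associated family $\nabla^{\lambda}$ from \eqref{eq:familyconn}). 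Inserting the explicit entries coming from \eqref{eq:1}--\eqref{eq:3} reduces this condition, after cancellations, exactly to $\partial_{\bar z} Q = 0$, which yields $(2)\Leftrightarrow (4)$ and $(3)\Leftrightarrow (4)$.

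The main obstacle is to carry out this Ruh--Vilms calculation uniformly for both Gauss maps, because $\Geo \cong \C P^1 \times \C P^1 \setminus \operatorname{diag}$ is a pseudo-Riemannian (in fact complex) symmetric space rather than a Riemannian one. One has to choose the correct involution on $\SL{\mathbb C}$ with fixed-point group $\mathrm{GL}_1 \mathbb C$, verify that the induced notion of harmonicity matches the tension field coming from the pseudo-Riemannian metric on $\Geo$, and check that the $(1,0)$-part of $\alpha_{\mathfrak p}$ in this second decomposition still detects precisely the Klotz differential $Q\,\d z^2$ and not merely a multiple of it that could vanish spuriously. Once this common reduction to $\bar\partial Q = 0$ is in place, all four conditions collapse to the same holomorphicity statement and the theorem follows.
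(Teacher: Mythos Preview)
The paper does not contain a proof of this theorem; it is quoted verbatim from \cite[Theorem~A]{IK:CGCH3} and stated without argument. There is therefore no ``paper's own proof'' against which your proposal can be compared.

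For what it is worth, your outline is in the spirit of the Ruh--Vilms approach that the cited source uses, and you have correctly located the nontrivial point: the Lagrangian Gauss map lands in the pseudo-Riemannian symmetric space $\Geo=\SL{\mathbb C}/\mathrm{GL}_1\mathbb C$, so one must pin down the right involution and check that the harmonicity condition coming from the loop-group formalism coincides with the geometric tension field before the reduction to $\bar\partial Q=0$ goes through. One small caution on your $(1)\Leftrightarrow(4)$ step: the phrase ``since $\sigma$ is real-valued, this forces $\bar\partial Q=0$ to be equivalent to $\partial_z\sigma=0$'' is not quite a logical step---realness of $\sigma$ does not by itself make $\partial_z\sigma$ real. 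What you actually need is that the Codazzi identity takes the form $\bar\partial Q=(\text{nowhere-vanishing factor})\cdot\partial_z\sigma$, and then that $\partial_z\sigma=0$ for a real $\sigma$ implies $\sigma$ is locally constant. You should verify explicitly that the prefactor does not vanish on the locus $e^u-|Q|^2e^{-u}>0$ where the second fundamental form is positive definite.
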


\subsection{The extended frames and CGC surfaces}\label{sbsc:extended}
 Let us consider the structure equations for a CGC $K>-1$ surface:
 \begin{equation}\label{eq:structures}
  \bar \partial \partial u + \frac{K}{2} (e^u -|Q|^2 e^{-u})=0
  \quad\mbox{and}\quad \bar \partial Q =0,     
 \end{equation}
 where we set
 \[
 \partial = \frac12\left(\frac{\partial}{\partial x} - i \frac{\partial}{\partial y}\right),\quad 
\bar  \partial = \frac12\left(\frac{\partial}{\partial x} + i \frac{\partial}{\partial y}\right).
 \]
 Note that, under a normalization $Q = 1$, the elliptic PDE \eqref{eq:structures}
 for $u$ becomes the famous \textit{sinh-Gordon} equation \cite{TKMil}. 
 
 If $u$ and $Q$ are solutions to \eqref{eq:structures}, then 
 $u$ and $\lambda^{-2} Q$ with any constant $\lambda \in S^1$
 are also solutions to \eqref{eq:structures}. Thus there exists 
 a family of CGC $K>-1$ surfaces $\{f^{\lambda}\}_{\lambda \in S^1}$
 in $\mathbb H^3$. Accordingly, there exists a family of unit normal vectors 
 $\{n^{\lambda}\}_{\lambda \in S^1}$ such that 
 \[
 G_{Le}^{\lambda} = (f^{\lambda},n^{\lambda}) : M \to \UH 
 \quad \mbox{and} \quad
  G_{La}^{\lambda} = f^{\lambda} \wedge n^{\lambda} : M \to \Geo 
 \]
 are a family of Legendrian and Lagrangian harmonic Gauss maps of $f^{\lambda}$, respectively.
 Then the family of CGC $-1<K=
 -1/\red{\cosh^2 (s/2)}<0$ surfaces in $\mathbb H^3$ has the following fundamental forms:
\begin{align}
 \mathrm{I}^{\lambda}&=\langle \d f^{\lambda}, \d f^{\lambda}\rangle=  \lambda^{-2}Q \> \dz^2+(e^u +|Q|^2e^{-u})\> \dz \dzb +\lambda^2 \bar Q\>\dzb^2, \label{eq:1a}\\
 \mathrm{I\!I}^{\lambda} &=\langle \d f^{\lambda}, \d  n^{\lambda} \rangle
 =\sigma (e^u -|Q|^2e^{-u})\>\dz \dzb,  \label{eq:2a}\\
 \mathrm{I\!I\!I}^{\lambda} &=\langle \d n^{\lambda}, \d n^{\lambda}\rangle=\sigma^2
  \left(-\lambda^{-2} Q \> \dz^2+(e^u +|Q|^2e^{-u})\> \dz \dzb
  -\lambda^2 \bar Q\>\dzb^2\right),\label{eq:3a}
\end{align}
 where 
 \begin{equation}\label{eq:sigma}
 \sigma = \tanh (s/2),\quad (s>0),
 \end{equation}
 and the Gaussian curvature $K$ of $f^{\lambda}$ is 
 \[
 -1<  K =-1 + \sigma^2 =  - \frac{1}{\cosh^2 (s/2)}<0.
 \]
  The $S^1$-parameter $\lambda$ has been called the \textit{spectral 
  parameter}, and the family of CGC $-1< K < 0$ surfaces 
  $\{f^{\lambda}\}_{\lambda \in S^1}$ in the above has been called the
  \textit{associated family} of $f$.
  Note that $\mathrm{I\!I}^{\lambda}$ is the same for any $\lambda \in S^1$, 
  that is $\mathrm{I\!I}^{\lambda} = \mathrm{I\!I}$.
  
 On the other hand, for a fixed positive number $s>0$, define two metrics parameterized by $\theta = \arg \lambda, (\lambda \in S^1)$ by 
 \[
 h_{\theta} := \frac1{\cosh^2(s/2)} \mathrm{I}^{\lambda} \quad \mbox{and} \quad
 h^{\star}_{\theta} := \frac1{\sinh^2(s/2)}  \mathrm{I\!I\!I}^{\lambda}. 
 \]
 Then $(\red{h_{\theta}}, \red{h_{\theta}^{\star}})$ is a pair of hyperbolic metrics. Moreover 
 by using the shape operator $B^{\lambda} = \left(\mathrm{I}^{\lambda}\right)^{-1} \mathrm{I\!I}^{\lambda}$,
 \[
 b_{\theta} = - \coth (s/2) \, B^{\lambda}
 \]
 is the Labourie operator for $(h_{\theta}, h_{\theta}^{\star})$.
 
 In \cite[Theorem B (1)]{IK:CGCH3}, it has been proved that 
 the family $\{G_{La}^{\lambda}\}_{\lambda \in S^1}$ of Lagrangian Gauss maps of $\{f^{\lambda}\}_{\lambda \in S^1}$ can be characterized as follows:
 \begin{theorem}[Theorem B (1) in \cite{IK:CGCH3}]
 Let $f^{\lambda}$ be the family of CGC $-1< K <0$ surfaces in $\mathbb H^3$ defined as
 above, and set $\blue{q \in \mathbb D^{\times}}$ such that 
 $\blue{|q| = \exp (-\operatorname{arcosh} \sqrt{-1/K})}$ holds.
 Then the map $G_{La}^{\lambda}|_{\lambda = q}$ is a harmonic local 
 diffeomorphism into $\mathbb H^2$.
 \end{theorem}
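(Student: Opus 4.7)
The plan is to use the extended frame machinery of the loop group method. By the preceding theorem, the Lagrangian Gauss map of a CGC surface with $K>-1$ is harmonic into the symmetric space $\Geo=\SL{\C}/\mathrm{GL}_1\C$, so it admits a family of flat connections $\nabla^{\lambda}=\d+\alpha^{\lambda}$ of the form \eqref{eq:familyconn} and an extended frame $F^{\lambda}:\widetilde M\to\LSL$ defined for all $\lambda\in\C^{\times}$. For $\lambda\in S^1$ the projection of $F^{\lambda}$ to $\SL{\C}/\mathrm{GL}_1\C$ recovers $G_{La}^{\lambda}$; by analytic continuation, the same projection at $\lambda=\lambda_0$ defines $G_{La}^{\lambda}|_{\lambda=\lambda_0}$ as a map into the complexified target.

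The crucial step is to show that at $|\lambda_0|=e^{s/2}=\exp(\operatorname{arcosh}\sqrt{-1/K})$ the frame acquires a new reality: there exists a constant gauge $\eta_{\lambda_0}\in\SL{\C}$ such that $\widetilde F:=F^{\lambda_0}\eta_{\lambda_0}$ takes values in $\ISU\subset\SL{\C}$. The $\lambda$-weights in \eqref{eq:familyconn} together with the $\lambda^{\pm 2}$-weights of the $(2,0)$- and $(0,2)$-parts of \eqref{eq:1a}--\eqref{eq:3a} force the gauged Maurer-Cartan form $\widetilde\alpha=\widetilde F^{-1}\d\widetilde F$ to satisfy the Hermitian reality condition for $\isu$ precisely at this modulus---the same modulus for which the spectral deformation $F^{\lambda}(F^{\lambda})^*|_{\lambda=\lambda_0}$ has the prescribed Gaussian curvature $K$.

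Once $\widetilde F$ is $\ISU$-valued, the projection $\pi\circ\widetilde F:\widetilde M\to\ISU/\Uone=\H^2$ is harmonic: $\widetilde\alpha$ still satisfies the Maurer-Cartan equation (flatness is gauge-invariant) and decomposes along $\isu=\mathfrak k\oplus\mathfrak p$ in the form \eqref{eq:familyconn} required for a harmonic map into $\H^2$. This projection agrees with $G_{La}^{\lambda}|_{\lambda=\lambda_0}$ via the embedding of $\ISU/\Uone=\H^2$ as the totally geodesic submanifold of $\Geo$ consisting of oriented geodesics of $\hy$ lying in a fixed totally geodesic copy of $\H^2\subset\hy$. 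The local diffeomorphism property then reduces to the non-vanishing of $\widetilde\alpha_{\mathfrak p}$, which after gauging combines $Q\,\dz$, $\bar Q\,\dzb$, and the positive conformal factor $e^u+|Q|^2 e^{-u}$ appearing in \eqref{eq:1a} on the regular locus.

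The main obstacle is the second step: constructing the gauge $\eta_{\lambda_0}$ explicitly and verifying that the reality constraint forces exactly the stated modulus $|\lambda_0|=e^{s/2}$ (as opposed to a $\SU$- or $\mathrm{SL}(2,\R)$-reality at other candidate moduli). Once this alignment is in hand, the harmonicity and local diffeomorphism statements follow essentially formally from the loop group characterization of harmonic maps into $\H^2$.
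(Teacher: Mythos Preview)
The paper does not prove this statement; it is quoted as Theorem~B\,(1) of \cite{IK:CGCH3} and used as input. There is therefore no proof in the present paper to compare against, and your proposal has to stand on its own.

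Your outline points in the right direction, but it leaves the decisive computation undone. You correctly flag the reality step as ``the main obstacle,'' and then assert that ``the $\lambda$-weights \dots\ force \dots\ the Hermitian reality condition for $\isu$ precisely at this modulus.'' That is a hope, not an argument: the $\lambda^{\pm 2}$ weights in \eqref{eq:1a}--\eqref{eq:3a} describe the deformation of the \emph{fundamental forms}, and nothing in your sketch converts this into a statement about the Maurer--Cartan form of a frame. What is actually required is to take an $\SL{\C}$-frame $F$ for the CGC surface (so $f=FF^{*}$, $n=F\Vec{e}_1F^{*}$), write $\alpha=F^{-1}\d F$ explicitly in terms of $u$, $Q$ and $\sigma=\tanh(s/2)$, insert the spectral parameter, and then check entry by entry that $\alpha^{\lambda}$ satisfies the $\isu$ reality exactly when $|\lambda|=e^{\pm s/2}$. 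A constant right gauge $\eta_{\lambda_0}$ only conjugates $\alpha^{\lambda_0}$ by a fixed matrix; it cannot create the required reality if $\alpha^{\lambda_0}$ does not already satisfy it up to a constant conjugation, and it certainly does not single out the modulus $e^{s/2}$. Routing the argument through the harmonic map into $\Geo=\SL{\C}/\mathrm{GL}_1\C$ does not help here either, since that symmetric space carries no real-form constraint on $S^{1}$ that would pick out a preferred modulus off $S^{1}$.

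Your local-diffeomorphism argument also misfires. Non-vanishing of $\widetilde\alpha_{\mathfrak p}$ is necessary but not sufficient for an immersion into a two-dimensional target: you need $\widetilde\alpha_{\mathfrak p}$ to have rank two pointwise. The quantity you invoke, $e^{u}+|Q|^{2}e^{-u}$ from \eqref{eq:1a}, is always positive and decides nothing. The relevant Jacobian factor is $e^{u}-|Q|^{2}e^{-u}$, which is positive precisely because the second fundamental form \eqref{eq:2} is positive definite---this is where the hypothesis $K>-1$, i.e.\ $\det B>0$, actually enters.
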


 Conversely, let $g: M \to \mathbb H^2$ be a harmonic map 
 and consider the extended frame 
 \[
 F^{\lambda} : \widetilde M \to \LISU,
 \]
 as explained in \red{the} introduction. 
 The extended frame is defined not only on $S^1$ but also on $\C^\times$.
  Then $F^{\lambda}|_{\lambda \in \C^{\times}}$ is a map into $\LSL$
  not $\LISU$.

  Let $\pi : \ISU \to \H^2$ be a natural projection. For the extended frame $F^{\lambda}$, 
 $\pi \circ F^{\lambda}|_{\lambda =1}$ is the original harmonic map $g$ up 
 to isometry and moreover, $g^{\lambda}= \pi \circ F^{\lambda}|_{\lambda \in S^1}$ gives 
 a family of harmonic maps in $\H^2$, the so-called {\rm associated family} of $g$.
 If $\hat Q\,\d z^2$ is the Hopf differential of $g$, then the Hopf differential of $g^{\lambda}$
 can be realized as $\lambda^{-2} \hat Q\, \d z^2$.
 \begin{theorem}[Theorem 2.1 in \cite{IK:CGCH3}]\label{thm:2.1}
  Let $F^{\lambda}$ be the extended frame of a harmonic map $g: M \to \mathbb H^2= \ISU/\Uone$
  and let $\blue{q = \exp (-t+ i \theta)} \in {\mathbb D}^{\times}$.
  Then 
    \[
  \blue{\hat f_{\blue{q}}}
  = F^{\lambda} (F^{\lambda})^*|_{\lambda = q}, \quad
  \blue{\hat n_{q}} =
F^{\lambda} \Vec{e}_1 (F^{\lambda})^*|_{\lambda = q}, 
  \]
   is a family of CGC surfaces in $\mathbb H^3$ with the constant Gaussian curvature 
\begin{equation}\label{eq:K}
K = - \left(\frac{2 |q|}{|q|^2 + 1}\right)^2 = -\frac1{\cosh^2 t} \in (-1, 0)
\end{equation}
 and a family of unit normals $\hat n_{q}$. 
 \blue{The Klotz differential of $\hat f_q$ is given by 
\begin{equation}\label{eq:familyKlotz}
 Q^{q}\, \d z^2  = e^{-2i \theta}\left(\frac{|q|^2 + 1}{2 |q|}\right)^2 \hat Q\, \d z^2. 
\end{equation}
 }
\st{ Moreover, the Klotz differential 
 of $f^{\lambda}$
 is given by }
\end{theorem}
\blue{
 By abuse of notation, the CGC $K = -1/\cosh^2 t$ surface $\hat f_{q}$ 
 will be called the \textit{spectral parameter deformation},
 and the $S^1$-family $\{ \hat f_{q}\}_{q = |q| e^{i\theta}}$ with fixed $|q|$
 becomes the associated family. Therefore there exists the associated family 
 $\{f^{\lambda}\}_{\lambda \in S^1}$ of some CGC $K = -1/\cosh^2 (s/2)$ with $s = 2 t$ and 
 $Q = Q^q$ surface $f$
 such that 
 \[
 f^{\lambda} = \hat f_{q}
 \]
 holds up to rigid motion.
 }
 Moreover, the Lagrangian Gauss map can be represented by $F^{\lambda}$ as
\begin{equation}\label{eq:familyLa}
 G_{La}^{\lambda} =  \blue{ \hat f_{q}\wedge \hat n_{q}}=
 F^{\lambda} \Vec{e}_1 (F^{\lambda})^{-1}|_{\lambda= q \in {\mathbb D}^{\times}}.
\end{equation}
 \begin{remark}
 From \red{\eqref{eq:K} and} \eqref{eq:familyKlotz}, the family of CGC surfaces $\hat f_{q}$
 in {\rm Theorem \ref{thm:2.1}} has a symmetry, that is,  $\hat f_{q}$ and $\hat f_{1/q}$
 are the same CGC surface up to rigid motion.
\end{remark}
\subsection{The complex landslides flow and CGC surfaces}\label{sbsc:cland}
 Recall that the landslide \eqref{eq:landslide} is given by 
\begin{equation*}
L_{\theta} (h, h^{\star}) :=  (h (\beta_{\theta} , \beta_{\theta}),
h (\beta_{\theta+\pi} , \beta_{\theta+ \pi})),
\end{equation*}
where $\beta_{\theta} := \cos (\theta/2) E + \sin (\theta/2) J b$, and 
$b$ is the Labourie operator for $(h, h^{\star})$, see \red{the} introduction.
Note that $J$ is the complex structure compatible with \st{respect to} $h$
and it is not related to the complex structure given in Section \ref{sbsc:Gauss}.
We recall the following fundamental lemma, see \cite{BMS:cyc1}.
\begin{lemma}\label{lem:CGC}
 Let $(h, h^{\star}) \in \mathcal T \times \mathcal T$ and a positive constant $s>0$. 
 Then there exists 
 a unique CGC  $-1< K = -1/\cosh^2 (s/2) <0$ surface $f$ in $\mathbb H^3$ such that 
 the first and third fundamental forms are respectively given as
\begin{equation}\label{eq:fundhh}
 \mathrm{I} = \cosh^2 (s/2) h \quad\mbox{and}\quad \mathrm{I\!I\!I} = \sinh^2 (s/2) h^{\star}.
\end{equation}
 Conversely, for a given CGC $-1< K= -1/\cosh^2(s/2) < 0$ surface $f$ in $\mathbb H^3$, define 
 $h$ and $h^{\star}$ by \eqref{eq:fundhh}. Then $h$ and $h^{\star}$ are hyperbolic metrics.
 \end{lemma}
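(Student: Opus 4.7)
The plan is to treat the two directions separately, with the forward direction relying on the fundamental theorem of surface theory in $\mathbb{H}^3$ and the converse reducing, for the less obvious claim about $h^{\star}$, to a computation using the explicit fundamental forms recorded in \eqref{eq:1}--\eqref{eq:3}.

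\textbf{Existence and uniqueness (given $(h,h^{\star})$).} Starting with the Labourie operator $b:TM\to TM$ associated to $(h,h^{\star})$, I would set
\[
\mathrm{I} := \cosh^{2}(s/2)\, h, \qquad B := -\tanh(s/2)\, b,
\]
and check that the pair $(\mathrm{I},B)$ satisfies the Gauss--Codazzi equations for a surface in $\mathbb{H}^{3}$. Since $\mathrm{I}$ is a constant rescaling of the hyperbolic metric $h$, its Gaussian curvature is $-1/\cosh^{2}(s/2)$, and since $\det b = 1$ we have $\det B = \tanh^{2}(s/2)$, so the Gauss identity $K_{\mathrm{I}} = -1 + \det B$ holds. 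The Codazzi equation $\d^{\nabla^{\mathrm{I}}} B = 0$ follows because $\nabla^{\mathrm{I}} = \nabla^{h}$ (a constant conformal factor) and $B$ is a constant multiple of $b$, which is itself Codazzi for $\nabla^{h}$. The fundamental theorem of surface theory in $\mathbb{H}^{3}$ then yields an immersion $f:\widetilde{M}\to\mathbb{H}^{3}$ with first fundamental form $\mathrm{I}$ and shape operator $B$, unique up to global isometry; the relations are invariant under the deck action, so $f$ descends to (a surface over) $M$. Finally, $\mathrm{I\!I\!I} = \mathrm{I}(B\,\cdot,B\,\cdot) = \cosh^{2}(s/2)\tanh^{2}(s/2)\, h(b\,\cdot,b\,\cdot) = \sinh^{2}(s/2)\, h^{\star}$, as required.

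\textbf{Converse.} Given a CGC $-1<K=-1/\cosh^{2}(s/2)<0$ surface $f$ in $\mathbb{H}^{3}$ with fundamental forms $\mathrm{I}$, $\mathrm{I\!I}$, $\mathrm{I\!I\!I}$, hyperbolicity of $h := \mathrm{I}/\cosh^{2}(s/2)$ is immediate from scaling: $K_{h} = \cosh^{2}(s/2)\,K_{\mathrm{I}} = -1$. For $h^{\star} = \mathrm{I\!I\!I}/\sinh^{2}(s/2)$ I would use the explicit expressions \eqref{eq:1} and \eqref{eq:3}: inspection shows that $\mathrm{I\!I\!I}/\sigma^{2}$ has exactly the same coordinate form as $\mathrm{I}$, but with the Klotz differential $Q$ replaced by $-Q$. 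Since the structure equations \eqref{eq:structures} are invariant under $Q\mapsto -Q$, the pair $(u,-Q)$ also corresponds to a CGC surface with the same constant Gaussian curvature $-1+\sigma^{2} = -1/\cosh^{2}(s/2)$; equivalently, $\mathrm{I\!I\!I}/\sigma^{2} = \mathrm{I}^{\lambda}|_{\lambda=i}$ in the associated family of Section \ref{sbsc:extended}. In either formulation, $K_{\mathrm{I\!I\!I}/\sigma^{2}} = -1/\cosh^{2}(s/2)$, so $K_{\mathrm{I\!I\!I}} = -1/(\sigma^{2}\cosh^{2}(s/2)) = -1/\sinh^{2}(s/2)$, and another constant rescaling gives $K_{h^{\star}} = -1$.

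\textbf{Main obstacle.} The only non-routine step is verifying the hyperbolicity of $h^{\star}$ in the converse, because it requires recognising the third fundamental form as (up to scale) a first fundamental form in the associated family. Once the invariance of the structure equations under $Q\mapsto -Q$ is noted, the computation is short; everything else is a bookkeeping exercise combining the Labourie operator properties with the fundamental theorem of surfaces in $\mathbb{H}^{3}$.
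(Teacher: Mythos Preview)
Your forward direction is essentially identical to the paper's: define $\mathrm{I}=\cosh^2(s/2)h$ and $B=-\tanh(s/2)b$, verify Gauss--Codazzi from the Labourie properties, invoke the fundamental theorem, and compute $\mathrm{I\!I\!I}=\sinh^2(s/2)h^{\star}$ via $\mathrm{I}(B\cdot,B\cdot)$. For the converse the paper simply writes ``the converse statement is clear'', whereas you supply an actual argument via the $Q\mapsto -Q$ symmetry (equivalently $\mathrm{I\!I\!I}/\sigma^2=\mathrm{I}^{\lambda}|_{\lambda=i}$); your computation is correct and fills in what the paper leaves implicit, so overall the proposal matches the paper's approach and is sound.
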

 \begin{proof}
 For a 
 given $(h, h^{\star})$, there exists a unique Labourie operator $b$ 
 such that $h(b\bullet, b\bullet) = h^{\star}$, see \red{the} introduction.
 Define 
 \[
 \mathrm{I} = \cosh^2 (s/2)h, \quad B = - \tanh (s/2) b.
 \]
 Note that $\det B = \tanh^2 (s/2)$ holds.

 Then from the properties \red{(2), (3)} of the Labourie operator $b$
 \red{in the introduction (A)}, 
 the pair $(\mathrm{I}, B)$ satisfies 
 \begin{equation}\label{eq:GC}
 \d^\nabla B=0, \quad K = -1 + \det B,     
 \end{equation}
 where $\nabla$ is the Levi-Civita connection of $\mathrm{I}$ 
 and $K = -1/\cosh^2(s/2)$ is the curvature of $\mathrm{I}$.  
 Since $ (\d^\nabla B) (u, v) = 0$ is \st{equivalent to} 
 \[
 \nabla_{u} (B v) -\nabla_{v} (B u) = B[u, v],
 \]
 which is the Codazzi equation for a surface in $\mathbb H^3$, see \cite[p.138]{doCarmo}. \red{Moreover,
 $B$ is self-adjoint with respect to $h$ by 
 (1) in the introduction (A)}.
 Since the second equation in \eqref{eq:GC} is 
 \red{just} the Gauss equation, thus there
 exists a unique surface $f$ \red{up to rigid motion}
 such that the first fundamental form is $\mathrm{I}$ and
 the second fundamental form $\mathrm{I\!I}$ is  $\mathrm{I\!I} (u, v):= \mathrm{I} (u, B v)$. 
 \st{Moreover}\red{Then}, the third fundamental form  $\mathrm{I\!I\!I}$ can be computed as
 \begin{align*}
 \mathrm{I\!I\!I}(u, v) 
 &= \mathrm{I} (B u, B v) \\
 &= \tanh^2(s/2) \mathrm{I} (b u, b v)\\
 &= \sinh^2(s/2) h(b u, b v)\\ 
 &= \sinh^2(s/2) h^{\star}(u, v), 
 \end{align*}
 and the claim follows.
 The converse statement is clear and this completes the proof.
  \end{proof}
 The complex landslide flow $P:{\overline \D}^{\times} \times \mathcal T \times \mathcal T 
 \to \mathcal P$ is defined by $P (q, h, h^{\star}) =
  SGr_{s}(L_{\theta}(h, h^{\star}))$ with $s = - \log |q|$ and $\theta = -\arg q$,
  where $SGr_{s}$ is the smooth grafting, which has been explained in \red{the} introduction:
\begin{enumerate}
    \item \red{Begin by considering}\st{Let} the pair of hyperbolic metrics by the landslide $L_{\theta} (h, h^{\star})$.
    \item By Lemma \ref{lem:CGC}, there exists a CGC surface $f$ with \red{Gaussian curvature in}
    $-1< K=-1/\cosh^2 (s/2) <0$. 
    \item \red{Consider the geodesic whose initial velocity is the unit normal $n(p)$ of $f(p)$, and take the point where it
    intersects the boundary of $\mathbb H^3 \cong \mathbb C P^1$.}
  \item It gives \red{a}\st{the} developing map 
 \begin{equation}\label{eq:developing}
  \operatorname{dev}_s: \widetilde M \to \mathbb C P^1,
 \end{equation}
  \red{which}\st{and it} defines a complex projective structure on $M$.  
\end{enumerate}
 \red{It is proved that}
 the complex landslide flow $P$ is holomorphic with respect to 
 $\overline {\mathbb  D}^{\times}$ by showing that the holonomy of 
 the developing map is holomorphic,
 \cite[Theorem 5.1]{BMS:cyc1}.

\section{Proof of the main theorem}\label{sc:proof}
 We now prove Theorem \ref{thm:main} in \red{the} introduction.
 The crucial observation is the following lemma about a relation of the complex structure $J$ \red{of}\st{on} 
 the first fundamental form $\mathrm{I}$ and the complex structure, which is denoted by $z = x + \red{y i}$,
 \red{in}\st{on} the conformal class of the second fundamental form $\mathrm{I\!I}$.
\begin{lemma}\label{lem:J}
 Let $f$ be a CGC $-1< K = -1/ \cosh^2 (s/2) <0$ surface in $\mathbb H^3$ with 
 the fundamental forms $\mathrm{I}, \mathrm{I\!I}$ and 
  $\mathrm{I\!I\!I}$ in \eqref{eq:1}, \eqref{eq:2} and \eqref{eq:3}, respectively. 
 Define an endomorphism $J: TM \to TM$ by 
\begin{equation}\label{eq:J}
\left\{
\begin{array}{l}
\displaystyle
 J \partial_z := i \coth (s/2) B \partial_{z} \\[0.1cm]
 \displaystyle
 J \partial_{\bar z} : =  -  i \coth (s/2) B \partial_{\bar z}
\end{array},
\right.
\end{equation}
 where $B$ is the shape operator of $f$.
 Then $J$ is the unique complex structure on $M$ compatible with the first fundamental form 
 $\mathrm{I}$.
\end{lemma}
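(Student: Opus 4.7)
The proof will be a direct computation in the complexified tangent basis $\{\partial_z,\partial_{\bar z}\}$, using the explicit fundamental forms from \eqref{eq:1} and \eqref{eq:2}. For brevity write $\rho = e^u + |Q|^2 e^{-u}$ and $\tau = e^u - |Q|^2 e^{-u}$, so that $\mathrm{I\!I}$ involves the factor $\sigma\tau$ and the Riemannian assumption away from umbilic points forces $\tau>0$. The single algebraic identity $\rho^2 - 4|Q|^2 = \tau^2$ is what drives the whole lemma, and the scaling factor $\coth(s/2)=1/\sigma$ appearing in the definition of $J$ is tuned precisely to it.

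First I would unpack the shape operator $B=\mathrm{I}^{-1}\mathrm{I\!I}$ in this basis. A short calculation, using that $\det\mathrm{I} = -\tau^2/4$ when $\mathrm{I}$ is written in $\{\partial_z,\partial_{\bar z}\}$, gives $B\partial_z = (\sigma/\tau)(\rho\partial_z - 2Q\partial_{\bar z})$ and its complex conjugate for $B\partial_{\bar z}$. Since the defining formulas \eqref{eq:J} for $J\partial_z$ and $J\partial_{\bar z}$ are also complex conjugates of one another (the sign of $i$ flips together with $\partial_z \leftrightarrow \partial_{\bar z}$ and $Q \leftrightarrow \bar Q$), $J$ extends to a well-defined real endomorphism of $TM$.

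Next I would verify the two structural properties. For $J^2 = -\operatorname{id}$: applying $J$ twice and substituting the expressions for $B\partial_z$ and $B\partial_{\bar z}$, the coefficient of $\partial_{\bar z}$ in $J^2\partial_z$ cancels identically, and the coefficient of $\partial_z$ reduces to $-(\rho^2 - 4|Q|^2)/\tau^2 = -1$ by the key identity; complex conjugation then handles $J^2\partial_{\bar z}$. For $\mathrm{I}$-compatibility, which (given $J^2=-\operatorname{id}$) is equivalent to the skew-adjointness $\mathrm{I}(Ju,v) + \mathrm{I}(u,Jv) = 0$, one uses that $B$ is $\mathrm{I}$-self-adjoint so $\mathrm{I}(B\partial_a,\partial_b) = \mathrm{I\!I}(\partial_a,\partial_b)$. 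The diagonal pairings $\mathrm{I}(J\partial_z,\partial_z)$ and $\mathrm{I}(J\partial_{\bar z},\partial_{\bar z})$ vanish because $\mathrm{I\!I}$ has only a $dz\,d\bar z$-term, while on the mixed pairing the opposite signs of $i$ in $J\partial_z$ versus $J\partial_{\bar z}$ produce exactly the required cancellation.

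Finally, uniqueness follows from the standard fact that an oriented Riemannian surface carries a unique compatible almost complex structure, so I only need to check that $J$ induces the same orientation as $z$. Passing to real coordinates one finds $J\partial_x = \frac{2\operatorname{Im}(Q)}{\tau}\partial_x + \frac{\mathrm{I}(\partial_x,\partial_x)}{\tau}\partial_y$; since $\tau>0$ and $\mathrm{I}(\partial_x,\partial_x)>0$, the $\partial_y$-coefficient is manifestly positive, which gives the orientation agreement. The main obstacle here is purely the bookkeeping of symmetric-tensor conventions in the complexified basis; once the identity $\rho^2 - 4|Q|^2 = \tau^2$ is recognized, every cancellation in the verification is forced by it, and the constant $\coth(s/2)$ becomes visible as the unique scaling for which $J^2=-\operatorname{id}$.
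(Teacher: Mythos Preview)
Your proof is correct and follows essentially the same direct-computation strategy as the paper: both work in the complexified basis $\{\partial_z,\partial_{\bar z}\}$, compute $B$ explicitly, verify $\mathrm{I}$-compatibility and $J^2=-\operatorname{id}$, and then check orientation. The one substantive difference is in the compatibility step: the paper verifies $\mathrm{I}(Ju,Jv)=\mathrm{I}(u,v)$ directly by recognizing $\mathrm{I}(B\,\cdot,B\,\cdot)=\mathrm{I\!I\!I}$ and reading off the components from \eqref{eq:3} (together with $\sigma^2\coth^2(s/2)=1$), whereas you check the equivalent skew-adjointness $\mathrm{I}(Ju,v)+\mathrm{I}(u,Jv)=0$ using only the self-adjointness of $B$ and the diagonality of $\mathrm{I\!I}$. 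Your route is marginally cleaner in that it never needs the explicit form of the third fundamental form, while the paper's route makes the geometric content (the relation $J \sim B$ intertwining $\mathrm{I}$ and $\mathrm{I\!I\!I}$) more visible; otherwise the arguments coincide, including your identity $\rho^2-4|Q|^2=\tau^2$, which is exactly what underlies the paper's claim that $J^2=-\operatorname{id}$ is ``easy to see.''
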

\begin{proof}
 \red{Since $\mathrm{I\!I}(\bullet, \bullet) = \mathrm{I} (\bullet, B \bullet)$, $\mathrm{I\!I\!I}(\bullet, \bullet) = 
 \mathrm{I} (B \bullet, B \bullet)$, we have}
  \st{By a straightforward computation,} 
 \begin{align*}
 \mathrm{I} (J \partial_z, J \partial_z) &=    - \coth^2 (s/2)\mathrm{I} ( B \partial_z, B \partial_z)  = - \coth^2 (s/2)\mathrm{I\!I\!I} ( \partial_z, \partial_z) =Q 
 = \mathrm{I} (\partial_z, \partial_z), \\
 \mathrm{I} (J \partial_{\bar z}, J \partial_{\bar z}) &=    - \coth^2(s/2)\mathrm{I} ( B \partial_{\bar z}, B \partial_{\bar z})  
  = - \coth^2 (s/2)\mathrm{I\!I\!I} ( \partial_{\bar z}, \partial_{\bar z}) =\bar Q 
 = \mathrm{I} (\partial_{\bar z}, \partial_{\bar z}), \\
 \intertext{and}
 \mathrm{I} (J \partial_{z}, J \partial_{\bar z}) &= 
 \coth^2 (s/2) \mathrm{I} ( B \partial_{z}, B \partial_{\bar z})  
  = \coth^2 (s/2) \mathrm{I\!I\!I} ( \partial_{z}, \partial_{\bar z}) =\frac12 (e^u + |Q|^2e^{-u}) = \mathrm{I} (\partial_{z}, \partial_{\bar z})
 \end{align*}
 hold, \red{where $Q\,\d z^2$ is the Klotz differential and $H$ is the mean curvature in \eqref{eq:H}.} Thus $J$ is compatible with $\mathrm{I}$. Moreover, \red{by using the above equations, 
 we have explicit formulas of $B \partial_z$ and
 $B \partial_{\bar z}$, and}\st{a straightforward computation shows that} 
\begin{equation}\label{eq:Jexplicit}
\begin{array}{l}
\displaystyle
 J \partial_z = \frac{2 iH}{\tanh(s/2)}  \partial_z -  \frac{2 i  Q}{e^u - |Q|^2e^{-u}} \partial_{\bar z},\\[0.1cm]
 \displaystyle
 J \partial_{\bar z}  =  \frac{2 i  \bar Q}{e^u - |Q|^2e^{-u}} \partial_{z}
- \frac{2 iH}{\tanh (s/2)}  \partial_{\bar z}
\end{array}
\end{equation}
 \red{follow.}
 From \red{the form} \st{the definition of} $J$ in \red{\eqref{eq:Jexplicit}}, it is easy to see that 
 \[
 J^2 \partial_z = - \partial_z\quad \mbox{and} \quad 
J^2 \partial_{\bar z} = - \partial_{\bar z}.
 \]
 It is also easy to see that $J$ preserves the orientation, and thus $J$ is the unique complex structure on $M$ 
 compatible with $\mathrm{I}$.
This completes the proof.
\end{proof}
\begin{corollary}\label{coro:betatheta}
 Let $\beta_{\theta}$ be the family of operators  by 
 $\beta_{\theta}= \cos(\theta/2) E + \sin (\theta/2) J b, \, (\theta \in [0, 2 \pi))$
 of the landslide flow in \eqref{eq:landslide}.
 Moreover, set $\lambda^{1/2} = \cos(\theta/2) + i \sin (\theta/2)\in S^1$.
 Then the following relations hold$:$
\begin{equation}\label{eq:beta}
\beta_{\theta} \partial_z =  \lambda^{-1/2} \partial_z\quad \mbox{and}\quad 
\beta_{\theta} \partial_{\bar z} =  \lambda^{1/2} \partial_{\bar z}.
\end{equation}
\end{corollary}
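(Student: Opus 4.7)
The plan is to reduce the identities \eqref{eq:beta} to a direct computation of the action of $Jb$ on the $(1,0)$- and $(0,1)$-vectors, using Lemma \ref{lem:J} to trade the shape operator $B$ for the complex structure $J$. First I would recall from the smooth grafting setup (see Lemma \ref{lem:CGC} and the paragraph preceding it) that if $f$ is the CGC surface associated to $(h,h^\star)$ with curvature $-1/\cosh^2(s/2)$, then the Labourie operator and the shape operator are related by $B=-\tanh(s/2)\,b$, i.e.\ $b=-\coth(s/2)\,B$.

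Next I would invert the defining relations of Lemma \ref{lem:J}. Since $J\partial_z=i\coth(s/2)\,B\partial_z$ and $J\partial_{\bar z}=-i\coth(s/2)\,B\partial_{\bar z}$, multiplying by $-i\tanh(s/2)$ (respectively $i\tanh(s/2)$) gives
\[
B\partial_z = -i\tanh(s/2)\,J\partial_z, \qquad B\partial_{\bar z}=i\tanh(s/2)\,J\partial_{\bar z}.
\]
Composing with $b=-\coth(s/2)\,B$ yields $b\partial_z=iJ\partial_z$ and $b\partial_{\bar z}=-iJ\partial_{\bar z}$, and hence $Jb\partial_z=iJ^2\partial_z=-i\partial_z$ and $Jb\partial_{\bar z}=-iJ^2\partial_{\bar z}=i\partial_{\bar z}$, using $J^2=-\mathrm{id}$.

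Finally I would substitute into $\beta_\theta=\cos(\theta/2)E+\sin(\theta/2)Jb$ to obtain
\[
\beta_\theta\partial_z=\bigl(\cos(\theta/2)-i\sin(\theta/2)\bigr)\partial_z=e^{-i\theta/2}\partial_z,\qquad
\beta_\theta\partial_{\bar z}=\bigl(\cos(\theta/2)+i\sin(\theta/2)\bigr)\partial_{\bar z}=e^{i\theta/2}\partial_{\bar z},
\]
which under $\lambda^{1/2}=e^{i\theta/2}$ is exactly \eqref{eq:beta}. There is no real obstacle here: once Lemma \ref{lem:J} is in place, the corollary is a bookkeeping exercise in signs and factors of $\coth(s/2)$ versus $\tanh(s/2)$. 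The only point to watch is the opposite sign between the actions on $\partial_z$ and $\partial_{\bar z}$, which is precisely what makes the $S^1$-action on type-decomposed tangent vectors look like multiplication by $\lambda^{\mp 1/2}$, and this is what will be exploited in the subsequent identification of the landslide with the spectral parameter.
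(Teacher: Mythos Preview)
Your proposal is correct and follows essentially the same approach as the paper: both compute $Jb\partial_z=-i\partial_z$ and $Jb\partial_{\bar z}=i\partial_{\bar z}$ directly from the relation $b=-\coth(s/2)B$ together with \eqref{eq:J}, and then substitute into the definition of $\beta_\theta$. You have simply spelled out the intermediate inversion of \eqref{eq:J} and the application of $J^2=-\mathrm{id}$ that the paper compresses into the phrase ``a straightforward computation by using \eqref{eq:J}.''
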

\begin{proof}
  A straightforward computation by using \eqref{eq:J} shows that 
\[
\beta_{\theta} \partial_z = \cos (\theta/2) \partial_z + \sin (\theta/2) J b \partial_{z}
= \cos (\theta/2) \partial_z -i \sin (\theta/2) \partial_{z} = \lambda^{-1/2} \partial_{z}
\]
holds. Similarly $\beta_{\theta} \partial_{\bar z} = \lambda^{1/2} \partial_{\bar z}$ holds. 
\end{proof}

Combining the landslide flow and \red{Corollary \ref{coro:betatheta}}\st{Lemma}, we have the following proposition.
\begin{proposition}\label{prp:associated}
 \blue{For a given landslide flow $(h_{\theta}, h^{\star}_{\theta})=L_{\theta}(h, h^{\star}) 
 \in \mathcal T \times \mathcal T, \, (\theta \in [0, 2 \pi))$ and a positive constant $s>0$,
 let $\tilde f_{q}\, (q = \exp(-s + i \theta))$ be the corresponding CGC 
 surface with $K = -1/\cosh^2 (s/2)$ in $\mathbb H^3$ given by {\rm Lemma \ref{lem:CGC}}.
  Then there exists a CGC  surface $f$ with $K = -1/\cosh^2 (s/2)$ and 
  its associated family $\{f^{\lambda}\}_{\lambda \in S^1}$ 
  $($where the conformal structure is given by the second fundamental form of $f)$
  in $\mathbb H^3$ such that 
   \[
  \tilde f_q =f^{\sqrt{\lambda}}\quad (\lambda = \exp (i \theta) \in S^1)
 \]
 holds up to rigid motion.}
\end{proposition}
\begin{proof}
First recall that $(h_{\theta}, h_{\theta}^{\star}) = L_{\theta} (h, h^{\star})$ 
 is given by 
\[
(h_{\theta}, h_{\theta}^{\star}) = (h(\beta_{\theta}, \beta_{\theta}), 
  h^{\star}(\beta_{\theta+\pi}, \beta_{\theta+\pi})), 
\quad 
\mbox{where $\beta_{\theta} = \cos (\theta/2) E + \sin (\theta/2) J b$},
 \]
 and $b$ is the Labourie operator between $h$ and $h^{\star}$.
The first and third fundamental forms of $\tilde f_{q}$ are given by 
\[
(\mathrm{I}_{q}, \mathrm{I\!I\!I}_{q})
  = (\cosh^2 (s/2)h_{\theta}, \sinh^2 (s/2)h^{\star}_{\theta}).
\]
 Moreover, the second fundamental form is $\mathrm{I\!I}_{\theta}
 =\mathrm{I}_{q}(\bullet, -\tanh(s/2)b_{\theta}\bullet)$, where 
 $b_{\theta}$ is the Labourie operator between $h_{\theta}$ and $h_{\theta}^{\star}$, 
 that is, $h_{\theta}^{\star} = h_{\theta} (b_{\theta} \bullet, b_{\theta} \bullet)$
 holds, see Lemma \ref{lem:CGC}.
 For $q_0  = \exp (-s)\in (0,1)$, we introduce the coordinates $z = x + \red{y i}$
 such that the first, second and third fundamental forms $\mathrm{I}_{q_0},\mathrm{I\!I}_{q_0}$ and $\mathrm{I\!I\!I}_{q_0}$ of $\tilde f_{q_0}$ can be represented by \eqref{eq:1}, \eqref{eq:2} and \eqref{eq:3}, that is,
 \[
(\mathrm{I}_{q_0}, \mathrm{I\!I}_{q_0}, \mathrm{I\!I\!I}_{q_0})= \cosh^2 (s/2) (h, \,  h(\bullet, B \bullet), \, h(B \bullet, B \bullet)) 
 \]
 holds, where we set $B = - \tanh (s/2) b$ with $b= b_{0}$. 
 It has been \red{shown}\st{known} \cite[Lemma 3.5]{BMS:cyc1} that the Labourie operator $b_{\theta}$ is explicitly given by $b_{\theta}= \beta_{-\theta} \circ b \circ \beta_{\theta}$. 
 
 To show that a CGC surface $\tilde f_{q}\, (q= \exp(-s + i \theta))$ constructed by Lemma \ref{lem:CGC} is the associated family $f^{\sqrt{\lambda}}\, (\lambda \in S^1)$ in Section \ref{sbsc:extended},
 we compare $\mathrm{I}_{q}, \mathrm{I\!I}_{q}$ and $\mathrm{I\!I\!I}_{q}$ and $\mathrm{I}^{\lambda}$ in \eqref{eq:1a}, $\mathrm{I\!I}^{\lambda}$ in \eqref{eq:2a} and $\mathrm{I\!I\!I}^{\lambda}$ in \eqref{eq:3a}, respectively; 
 Set $\lambda = e^{i \theta} \in S^1$.
 By Corollary \ref{coro:betatheta},
\begin{align*}
\mathrm{I}_{\theta} (\partial_z, \partial_z) &:= 
\cosh^2 (s/2) h(\beta_{\theta}\partial_z, \beta_{\theta}\partial_z) \\ 
&= \cosh^2 (s/2) h(\lambda^{\red{-}1/2}\partial_z, \lambda^{\red{-}1/2}\partial_z) \\
&= \lambda^{\red{-1}} \cosh^2 (s/2) h(\partial_z, \partial_z) \\
&= \mathrm{I}^{\sqrt{\lambda}}(\partial_z, \partial_z)
\end{align*}
holds.
Similarly, $\mathrm{I}_{\theta} (\partial_{\bar z}, \partial_{\bar z})  = \mathrm{I}^{\sqrt{\lambda}} (\partial_{\bar z}, \partial_{\bar z})$ 
follows. Moreover, 
\begin{align*}
\mathrm{I}_{\theta} (\partial_z, \partial_{\bar z}) 
&:= \cosh^2 (s/2) h(\beta_{\theta}\partial_z, \beta_{\theta}\partial_{\bar z}) \\ 
&= \cosh^2 (s/2) h(\lambda^{\red{-}1/2}\partial_z, \lambda^{1/2}\partial_{\bar z}) \\
&= \cosh^2 (s/2) h(\partial_z, \partial_{\bar z}) \\
&= \mathrm{I}^{\sqrt{\lambda}}(\partial_z, \partial_{\bar z})
\end{align*}
holds, therefore $\mathrm{I}_{\theta} = \mathrm{I}^{\sqrt{\lambda}}$.
The same argument can be applied to $\mathrm{I\!I\!I}_{\theta}$, and it follows  $\mathrm{I\!I\!I}_{\theta}= \mathrm{I\!I\!I}^{\sqrt{\lambda}}$.
Next the second fundamental form $\mathrm{I\!I}_{\theta}$  can be computed as
\[
\mathrm{I\!I}_{\theta}(\partial_z, \partial_{z})
= \mathrm{I}_{\theta}(\partial_z, B_{\theta} \partial_{z}).
\]
 Since $b_{\theta} = \beta_{-\theta} \circ b \circ \beta_{\theta}$, \st{thus} the 
 shape operator $B_{\theta}$ is given by $\beta_{-\theta} \circ B \circ \beta_{\theta}$, and thus
\begin{align*}
 \mathrm{I\!I}_{\theta}(\partial_z, \partial_{z}) &= 
 \mathrm{I}_{\theta}(\partial_z,  \beta_{-\theta} \circ B \circ \beta_{\theta} \partial_{z})  \\
 &=  \mathrm{I}(\beta_{\theta} \partial_z, B \circ \beta_{\theta} \partial_{z})  \\
 &= \lambda^{\red{-1}} \mathrm{I}(\partial_z,   B\partial_{z})  \\
 &=0= \mathrm{I\!I}^{\sqrt{\lambda}}(\partial_z, \partial_{z})
\end{align*}
holds. Similarly $\mathrm{I\!I}_{\theta}(\partial_{\bar z}, \partial_{\bar z}) =0= \mathrm{I\!I}^{\sqrt{\lambda}}(\partial_{\bar z}, \partial_{\bar z})$
holds. Finally
\begin{align*}
 \mathrm{I\!I}_{\theta}(\partial_z, \partial_{\bar z}) &= 
 \mathrm{I}_{\theta}(\partial_z,  \beta_{-\theta} \circ B \circ \beta_{\theta} \partial_{\bar z})  \\
 &=  \mathrm{I}(\beta_{\theta} \partial_z, B \circ \beta_{\theta} \partial_{\bar z})  \\
 &= \mathrm{I}(\partial_z,   B\partial_{\bar z})  \\
 &= \mathrm{I\!I}^{\sqrt{\lambda}}(\partial_z, \partial_{\bar z}).
\end{align*}
Thus $ \mathrm{I\!I}_{\theta} =  \mathrm{I\!I}^{\sqrt{\lambda}}$ also follows, and 
therefore the two families of CGC surfaces $\{\tilde f_{q}\}_{q =|q| e^{i\theta}}  \;
(\theta \in [0, 2\pi))$ 
and $\{f^{\sqrt{\lambda}}\}_{\lambda \in S^1}$ are the 
same up to isometry. \st{Since $f= \tilde f_{q_0} =f^{\sqrt{\lambda}}|_{\lambda =1}$ 
thus the two families exactly coincide.} This completes the proof.
\end{proof}
\begin{remark}\blue{In the proof of {\rm Proposition \ref{prp:associated}}, 
 we showed the equivalence of the first, second and 
 third fundamental forms, respectively. In fact the equivalence of 
 two of them are sufficient, since surfaces are convex.}
\end{remark}
 We now look at a relation between the complex landslide and \blue{the spectral
 parameter deformation in 
 Theorem \ref{thm:2.1}.}\st{Lagrangian Gauss map.}
\begin{corollary}\label{coro:complexland}
 \blue{Retain the assumptions in {\rm Proposition \ref{prp:associated}}. Moreover let  $\hat f_{\blue{q}}$ be the spectral parameter deformation in {\rm Theorem \ref{thm:2.1}} corresponding to $f^{\lambda}$.
 Then 
 \[
  \tilde f_q =\hat f_{\sqrt{q}}, \quad (q = \exp (-s + i \theta) \in \mathbb D^{\times}),
 \]
 holds. Moreover,  the complex landslide $P_{q}(= SGr \circ L), \, (q =\exp(-s + i \theta))$ is given by the associated family
 of the Lagrangian Gauss map of $\hat f_{\sqrt{q}}$  and vice versa.
 }
\end{corollary}
\begin{proof}
 The first statement follows from Theorem \ref{thm:2.1}.
 Let us consider the smooth grafting $SGr_{s}$ of the landslide $L_{\theta}(h, h^{\star})$.
 By Proposition \ref{prp:associated}, there exists the family of corresponding CGC $-1< K = -1/\cosh^2(s/2) <0$ surfaces, which is the 
 associated family $\{f^{\sqrt{\lambda}}\}_{\lambda \in S^1}$ with $\lambda = e^{i\theta}$.
  Then the developing map $\operatorname{dev}_s: \widetilde M \to  \C P^1$ 
  of $SGr_{s}$ is given as in \eqref{eq:developing}.
 
 \st{On the one hand,} The Lagrangian Gauss map $G_{La}^{\lambda} 
 = f^{\lambda} \wedge n^{\lambda}$ takes values in 
  \begin{equation*}
 \Geo \cong \Gr = \mathrm{SL}_{2}\mathbb{C}/\mathrm{GL}_1\mathbb{C} 
 \cong \C P^1 \times \C P^1 \setminus \operatorname{diag},
 \end{equation*}
 \red{which is a pair of intersecting points of the geodesic from the point 
 $f^{\sqrt{\lambda}}(p)$ with initial velocity $\pm n(p)$ to the pair of boundaries of
 $\mathbb H^3 \cong \mathbb C P^1$,}
 and thus $\operatorname{dev}_s$ is given by 
 $\operatorname{pr}_1 \circ G_{La}^{\lambda}$, where 
 $\operatorname{pr}_1$ denotes the projection to the first component.
 Moreover, $G_{La}^{\lambda}$ can be represented by 
 \begin{equation}\label{eq:LaGauss}
G_{La}^{\lambda} = \hat f_{\sqrt{q}} \wedge \hat n_{\sqrt{q}} 
= (F^{\lambda})^{-1} \Vec{e}_1F^{\lambda}|_{\lambda = \sqrt{q}}, \quad (q= 
\exp (-s + i \theta) \in \mathbb D^{\times}).
 \end{equation}
 Therefore the Lagrangian Gauss map gives 
 the developing map $\operatorname{dev}_s$ and thus the complex landslide.
 The converse statement is also clear.
 This completes the proof.
 \end{proof}
 By Corollary \ref{coro:complexland}, the holonomy of the 
 complex landslide $P_q = SGr_{s}\circ L_{\theta}$ with $s= - \log|q|, 
 \theta = \arg q$ 
 is given by the holonomy of the \red{extended} frame $F^{\lambda}$ evaluated at $\lambda = \sqrt{q}$, and the statement of Theorem \ref{thm:main} follows.
 We are now ready to prove  Corollary \ref{coro:landslide} in \red{the} introduction.
\begin{proof}[Proof of {\rm Corollary \ref{coro:landslide}}]
 To see the holomorphic dependence with respect to $\lambda$,
 we consider the gauged extended frame:
 \[
F^{\lambda} \Vec{e}_1(F^{\lambda})^{-1}|_{\lambda = \sqrt{q}}
= F^{\lambda} D^{\lambda} \Vec{e}_1(F^{\lambda} D^{\lambda})^{-1}|_{\lambda = \sqrt{q}}
= D^{\sqrt{\lambda}}
\hat F^{\lambda} \Vec{e}_1(\hat F^{\lambda})^{-1}
 D^{1/\sqrt{\lambda}}\big|_{\lambda = q},
 \]
 where $D^{\lambda} = \diag (\lambda^{-1/2}, \lambda^{1/2})$ and we set $\hat F^{\lambda} = (D^{1/\lambda} F^{\lambda} D^{\lambda})|_{\lambda =\sqrt{\lambda}}$. 
 Recalling that $\alpha^{\lambda} = (F^{\lambda})^{-1} \d F^{\lambda} = 
 \lambda^{-1} \alpha_{\mathfrak p}^{\prime} + \alpha_{\mathfrak k}
+ \lambda \alpha_{\mathfrak p}^{\prime \prime}$,
 it is straightforward to see that 
\begin{equation}\label{eq:alphahat}
\hat \alpha^{\lambda} := (\hat F^{\lambda})^{-1}\d \hat F^{\lambda}
= \lambda^{-1} {\alpha_{\mathfrak p}^{\prime}}_{21} + {\alpha_{\mathfrak p}^{\prime}}_{12}
 + \alpha_{\mathfrak k}
+ \lambda {\alpha_{\mathfrak p}^{\prime \prime}}_{12}
+ {\alpha_{\mathfrak p}^{\prime \prime}}_{21}, 
\end{equation}
 where $x_{ij} (i, j \in \{1, 2\})$ denotes the $(i j)$-entry for a $2$ by $2$ matrix valued 
 $1$-form $x$. Therefore $\hat F^{\lambda}$ 
 is holomorphic with respect to $\lambda \in \mathbb D^{\times}$, and moreover
\[
\widehat {\operatorname{dev}}_s = 
\operatorname{pr}_1 \circ \left(\hat F^{\lambda} \Vec{e}_1(\hat F^{\lambda})^{-1}\right)\Big|_{\lambda = q}
\]
 defines the same complex projective structure given by $\operatorname{dev}_s$.
 The gauged frame $\hat F^{\lambda}$ will be called the \textit{untwisted extended frame}, and 
 we set $\widehat \nabla^{\lambda} = \d  + \hat \alpha^{\lambda}$.
 
 Then the holonomy of the untwisted extended frame of $\widehat \nabla^{\lambda}$
 is given by 
 \[
  \hat H^{\lambda}: \pi_1 (M) \to \LSLU
 \]
 with $\gamma^* \hat F^{\lambda} = \hat H^{\lambda} \hat F^{\lambda}$ for $\gamma \in \pi_1 (M)$ and $\lambda \in \mathbb D^{\times}$.
 Clearly the holonomy $\hat H^{\lambda}$ is holomorphic with respect to the spectral parameter $\lambda \in  
 \mathbb D^{\times}$, and thus the holonomy of the developing map $\widehat{\operatorname{dev}}_s$ is 
 holomorphic with respect to $\lambda$ in $\mathbb D^{\times}$.
 The limit $s\to 0$ to the holonomy of the developing map $\widehat{\operatorname{dev}}_s$ gives the 
 holonomy at $\lambda \in S^1$.
 This completes the proof.
\end{proof}
\begin{remark}
 It is well known \cite{BRS:Min} that the relation 
 \[
 F^{\lambda} \longleftrightarrow 
 \hat F^{\lambda} = (D^{1/\lambda} F^{\lambda} D^{\lambda})|_{\lambda =\sqrt{\lambda}}
  \quad  \mbox{with}\quad  D^{\lambda}= \diag (\lambda^{1/2}, \lambda^{-1/2})
 \]
 is an isomorphism  between twisted and untwisted loop groups.  \red{Note that,}
 $F^{\lambda}$ belongs to the twisted loop group $\LSL$ with respect to $\tau$, 
 that is, $F^{-\lambda} = \tau F^{\lambda}$ holds, while 
 $\hat F^{\lambda}$ belongs to \red{the} untwisted loop group $\LSLU$ and 
 it does not satisfy such \red{a} relation.
\end{remark}
\bibliographystyle{plain}

 \end{document}